\newcommand{\dif}{\mathrm{d}}
\newcommand{\be}{\begin{eqnarray}}
\newcommand{\ee}{\end{eqnarray}}
\newcommand{\ce}{\begin{eqnarray*}}
\newcommand{\de}{\end{eqnarray*}}
\newtheorem{theorem}{Theorem}[section]
\newtheorem{lemma}[theorem]{Lemma}
\newtheorem{remark}[theorem]{Remark}
\newtheorem{definition}[theorem]{Definition}
\newtheorem{proposition}[theorem]{Proposition}
\newtheorem{example}[theorem]{Example}
\newtheorem{corollary}[theorem]{Corollary}
\def\[{{\Big[}}
\def\]{{\Big]}}
\def\<{{\langle}}
\def\>{{\rangle}}
\def\({{\Big(}}
\def\){{\Big)}}
\def\no{\nonumber}
\def\bt{\begin{theorem}}
\def\et{\end{theorem}}
\def\bl{\begin{lemma}}
\def\el{\end{lemma}}
\def\br{\begin{remark}}
\def\er{\end{remark}}
\def\bx{\begin{Example}}
\def\ex{\end{Example}}
\def\bd{\begin{definition}}
\def\ed{\end{definition}}
\def\bp{\begin{proposition}}
\def\ep{\end{proposition}}
\def\bc{\begin{corollary}}
\def\ec{\end{corollary}}
\def\supp{\text{\rm supp}}
\def\cF{{\mathcal F}}
\def\cH{{\mathcal H}}
\def\mE{{\mathbb E}}
\def\mN{{\mathbb N}}
\def\mP{{\mathbb P}}
\def\mR{{\mathbb R}}
\def\mU{{\mathbb U}}
\def\geq{\geqslant}
\def\leq{\leqslant}
\begin{document}

\allowdisplaybreaks

\title{Characterising the path-independent property of the Girsanov density for degenerated
stochastic differential equations}

\author{Bo Wu$^1$ and Jiang-Lun Wu$^2$}

\thanks{{\it AMS Subject Classification(2010):} 60H10; 35Q53}

\thanks{{\it Keywords:} degenerated stochastic differential equations (SDEs), Girsanov transformation, non-Lipschnitz SDEs with jumps, semi-linear partial integro-differential
equation of parabolic type. }

\thanks{*This work was partly supported by NSF of China (No. 11371099).}

\subjclass{}

\date{}

\dedicatory{1. School of Mathematical Sciences, Fudan
University\\
 Shanghai 200433, China\\
wubo@fudan.edu.cn\\
2. Department of Mathematics, College of Science, Swansea University\\
Singleton Park, Swansea SA2 8PP, UK\\
j.l.wu@swansea.ac.uk}

\begin{abstract}
In this paper, we derive a characterisation theorem for the path-independent property of the density of the Girsanov transformation for {\it degenerated} stochastic differential equations (SDEs), extending the characterisation theorem of \cite{twwy} for the non-degenerated SDEs. We further extends our consideration to non-Lipschitz SDEs with jumps and with degenerated diffusion coefficients, which generalises the corresponding characterisation theorem established in \cite{hqwu}.
\end{abstract}

\maketitle \rm

\section{Introduction}
Let $(\Omega,\cF,\mP, \{\cF_t\}_{t\geq0})$ be a filtered probability space. Let $d,m\in\mN$ be fixed. We are concerned with the following SDE
\begin{equation}\label{eq1.1}
\dif X_t=b(t,X_t)\dif t +\sigma(t,X_t)\dif W_t, \quad t\ge0,
\end{equation}
where
$$b:[0,\infty)\times \mR^d\rightarrow\mR^d,\quad \sigma:[0,\infty)\times\mR^d\rightarrow \mR^{d\otimes m}$$
$(W_t)_{t\geq0}$ is an $m$-dimensional $\{\cF_t\}_{t\geq0}$-Brownian motion. Under standard usual conditions,
e.g. the two coefficients $b$ and $\sigma$ satisfy linear growth and local Lipschitz conditions (for the second variable), there is a unique solution to the above SDE (\ref{eq1.1}) for a given initial data $X_0$, see, e.g., \cite{iw}.

The celebrated Girsanov theorem provides a very powerful tool
to solve SDEs under the name of the {\it Girsanov transformation}
or the {\it transformation of the drift}. We use $|\cdot|$ and $\<\cdot,\cdot\>$ to denote
the Euclidean norm and scalar product of vectors in $\mR^m$ or $\mR^d$, respectively.
Let $\gamma:[0,\infty)\times \mR^d\rightarrow\mR^m$ be a measurable function such that the following exponential
integrability along the paths of the solution $(X_t)_{t\geq0}$ holds (also known as Novikov condition)
\begin{equation}\label{eq1.2}
\mE\left(\exp\left\{-\int^t_0|\gamma(s,X_s)|^2\dif s+\int^t_0\<\gamma(s,X_s), \dif W_s\>\right\}\right)<\infty,\quad t\geq0 .
\end{equation}
Then, Girsanov theorem (\cite{dm,iw,jjas}) says that for any arbitrarily fixed $T>0$
\begin{equation}\label{eq1.3}
\tilde{W}_t:=W_t-\int^t_0\gamma(s,X_s)\dif s, \quad t\in[0,T]
\end{equation}
is an $m$-dimensional $\{\cF_t\}_{t\in[0,T]}$-Brownian motion under the probability measure
\begin{equation}\label{eq1.4}
Q_T:=\exp\left\{-\int^T_0|\gamma(s,X_s)|^2\dif s+\int^T_0\<\gamma(s,X_s), \dif W_s\>\right\}\cdot \mP .
\end{equation}
Moreover, the solution $(X_t)_{t\in[0,T]}$ fulfils the following SDE
\begin{equation}\label{eq1.5}
\dif X_t=[b(t,X_t)+\sigma(t,X_t)\gamma(t,X_t)]\dif t +\sigma(t,X_t)\dif \tilde{W}_t, \quad t\in[0,T].
\end{equation}

Now let us assume that (along the paths of the solution $(X_t)_{t\geq0}$)
\begin{equation}\label{eq1.6}
b(t,X_t)-\sigma(t,X_t)\gamma(t,X_t)=0, \quad a.s. ~\forall~t\geq0.
\end{equation}
Equivalently, $b\in Im(\sigma)$, where $Im(\sigma)$ is the imagine space of $\sigma$.
Then
\begin{equation}\label{eq1.7}
\dif X_t=\sigma(t,X_t)\dif \tilde{W}_t.
\end{equation}
We are interested in the path-independent property for the exponent of the Girsanov density of $Q_T$ for any fixed $T>0$. That is, whether there exists a scalar function
$v:[0,\infty)\times \mR^d\rightarrow\mR$ such that
\begin{equation}\label{eq1.8}
Z_t:=\frac{1}{2}\int^t_0|\gamma(s,X_s)|^2\dif s+\int^t_0\<\gamma(s,X_s), \dif W_s\>
=v(t,X_t)-v(0,X_0),\quad t\ge0.
\end{equation}
This problem arises from a number of studies in economics, finance as well as from stochastic mechanics, just mention a few, see \cite{twwy,wy} (and references therein).

If $\sigma:[0,\infty)\times\mR^d\rightarrow\mR^{d\otimes d}$ (i.e., taking $m=d$) is non-degenerate, that is, the $d\times d$-matrix $\sigma(t,x)$ is invertible for any $(t,x)\in [0,\infty)\times\mR^d$,
 a characterisation of the path-independent property has obtained in \cite{twwy}.

Throughout the article, we assume that there is a unique solution to the above SDE (\ref{eq1.1}) for a given initial data $X_0$. In particular, we allow $\sigma$ be degenerate.

Let $\Lambda_t:=\{X_t(\omega)\in \mR^d:\omega\in \Omega\}\subset\mR^d$, the support of the solution. In particular, for each $t>0$, we have $\Lambda_t=\Lambda(:=\cap \Lambda_t)$ if $b(t,x)$ and $\sigma(t,x)$ satisfy the H\"{o}rmander's conditions for any $t\geq0$. Then by using It\^{o}'s formula to $v(t,X_t)$ viewing as the composition of
$v:[0,\infty)\times \mR^d\rightarrow\mR$ with the semimartingale $(X_t)_{t\geq0}$, the utilising the uniqueness of Doob-Meyer decomposition for continuous semimartingales, we can derive for any $t\geq0$ the following
\begin{equation}\label{eq1.9}\aligned
\gamma(t,X_t)=\sigma^*(t,X_t)\nabla v(t,X_t)
\endaligned\end{equation}
and
\begin{equation}\label{eq1.10}
\frac{1}{2}|\gamma(t,X_t)|^2=\frac{\partial v}{\partial t}(t,X_t)+\<\nabla v(t,X_t), b(t,X_t)\>+\frac{1}{2}Tr[(\sigma\sigma^*(t,X_t)\nabla^2v(t,X_t))]
\end{equation}
where $\sigma^*(t,x)$ stands for the transposed matrix of $\sigma(t,x)$, $\nabla$ and $\nabla^2$ stand for the gradient and Hessian operators with respect to the second variable, respectively.
Moreover, we get
\begin{equation}\label{eq1.11}\aligned
\begin{cases}
&\gamma(t,x)=\sigma^*(t,x)\nabla v(t,x)\quad(I)\\
&\frac{1}{2}|\gamma(t,x)|^2=\frac{\partial v}{\partial t}(t,x)+\<\nabla v(t,x), b(t,x)\>
+\frac{1}{2}Tr[(\sigma\sigma^*(t,x)\nabla^2v(t,x))]\quad (II)
\end{cases}
\endaligned\end{equation}
for any $(t,x)\in [0,\infty)\times \Lambda$. Putting (I) into (II) and (\ref{eq1.6}) yield the following nonlinear parabolic PDE of the (reversible) HJB type
\begin{equation}\label{eq1.12}\aligned
\begin{cases}
&\frac{\partial v}{\partial t}(t,x)=-\frac{1}{2}\{Tr[(\sigma\sigma^*(t,x)\nabla^2v(t,x))]+|\sigma^*(t,x)\nabla v(t,x)|^2\},\\
&\sigma(t,x)\sigma^*(t,x)\nabla v(t,x)=b(t,x),\quad (t,x)\in [0,\infty)\times \Lambda,
\end{cases}
\endaligned\end{equation}
where $\Lambda:=\cap \Lambda_t$.

\begin{remark}\label{r1.1} All above derivations are reciprocal, namely, that gives a characterisation of path-independence property.
\end{remark}

\begin{theorem}\label{th1.1} Assume that $\gamma:[0,\infty)\times \mR^d\rightarrow\mR^d$ is a function satisfying (\ref{eq1.6}). Then there exists a unique scalar function $v\in C^{1,2}((0,\infty)\times \mR^d\rightarrow \mR)$ such that
\begin{equation}\label{eq1.13}\aligned\frac{1}{2}\int^t_0|\gamma(s,X_s)|^2\dif s+\int^t_0\<\gamma(s,X_s), \dif W_s\>=v(t,X_t)-v(0,X_0)
\endaligned\end{equation}
if and only if (\ref{eq1.12}) holds.
\end{theorem}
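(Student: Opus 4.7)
The plan is to establish the equivalence by applying It\^o's formula to $v(t,X_t)$ and invoking the uniqueness of the Doob--Meyer decomposition for continuous semimartingales. The pre-theorem discussion already carries out one direction of this argument and notes (Remark \ref{r1.1}) that every step is reversible, so most of the work is in recording the calculation carefully on the support $\Lambda$ and handling the extra flexibility coming from the degeneracy of $\sigma$.

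\textbf{Necessity.} Assume (\ref{eq1.13}) holds for some $v\in C^{1,2}$. The left-hand side is a continuous semimartingale whose finite-variation part is $\tfrac12\int_0^t|\gamma(s,X_s)|^2\dif s$ and whose martingale part is $\int_0^t\la\gamma(s,X_s),\dif W_s\ra$. Applying It\^o's formula to the right-hand side yields another Doob--Meyer decomposition of the same semimartingale, with finite-variation integrand $\tfrac{\p v}{\p t}+\la\nabla v,b\ra+\tfrac12\tr[\sigma\sigma^*\nabla^2 v]$ and martingale integrand $\sigma^*\nabla v$. The standard argument---equating quadratic variations of the martingale parts and differentiating the drift parts in $t$---forces $\sigma^*\nabla v(t,X_t)=\gamma(t,X_t)$ and $\tfrac12|\gamma(t,X_t)|^2=\tfrac{\p v}{\p t}(t,X_t)+\la\nabla v(t,X_t),b(t,X_t)\ra+\tfrac12\tr[\sigma\sigma^*\nabla^2 v](t,X_t)$ almost surely for every $t$. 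Under the H\"ormander hypothesis stated before the theorem, $\Lambda_t=\Lambda$ is independent of $t>0$, so these identities upgrade to pointwise identities on $[0,\infty)\times\Lambda$, giving precisely parts (I) and (II) of (\ref{eq1.11}). Substituting (I) into (II) and using (\ref{eq1.6}) in the form $\sigma\sigma^*\nabla v=\sigma\gamma=b$ produces the system (\ref{eq1.12}).

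\textbf{Sufficiency.} Conversely, let $v\in C^{1,2}$ satisfy (\ref{eq1.12}). The algebraic constraint $\sigma\sigma^*\nabla v=b$ combined with (\ref{eq1.6}) gives $\sigma(\sigma^*\nabla v-\gamma)=0$, so on $\Lambda$ the two vectors $\sigma^*\nabla v$ and $\gamma$ differ only by an element of $\ker\sigma$, and we may impose the compatibility relation $\gamma=\sigma^*\nabla v$, that is, part (I) of (\ref{eq1.11}), to select the representative of $\gamma$ within its equivalence class. Applying It\^o's formula to $v(t,X_t)$ along (\ref{eq1.1}) and using $\la\nabla v,b\ra=|\sigma^*\nabla v|^2$ (immediate from $b=\sigma\sigma^*\nabla v$) shows, via the parabolic equation in (\ref{eq1.12}), that the drift of $v(t,X_t)-v(0,X_0)$ collapses to $\tfrac12|\sigma^*\nabla v|^2=\tfrac12|\gamma|^2$, while the martingale part is $\int_0^t\la\sigma^*\nabla v,\dif W_s\ra=\int_0^t\la\gamma,\dif W_s\ra$; summing yields (\ref{eq1.13}).

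The main obstacle I anticipate lies in the step that converts an almost-sure identity holding along the trajectory $X_t$ into a genuine pointwise identity on the support $\Lambda$; this is precisely where the H\"ormander-type hypothesis is essential, because without it one only recovers the equations in a weaker support-theoretic sense, and moreover the degeneracy of $\sigma$ introduces a genuine ambiguity in $\gamma$ modulo $\ker\sigma$ that must be resolved through (I) of (\ref{eq1.11}). A secondary point is the claim of uniqueness of $v$: the identity (\ref{eq1.13}) is invariant under $v\mapsto v+c$, so ``unique'' should be read modulo an additive constant which is fixed once $v(0,X_0)$ is prescribed; the remainder of $v$ on $\Lambda$ is then determined by path-integrating the gradient relation $\sigma^*\nabla v=\gamma$.
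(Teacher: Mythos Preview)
Your proposal is correct and follows essentially the same route as the paper: both directions rest on It\^o's formula applied to $v(t,X_t)$ together with the uniqueness of the Doob--Meyer decomposition, with your necessity argument matching the pre-theorem discussion and your sufficiency argument matching the paper's short computation (\ref{eq1.14})--(\ref{eq1.15}). You are in fact more explicit than the paper about the degeneracy-induced ambiguity $\gamma-\sigma^*\nabla v\in\ker\sigma$ and about uniqueness of $v$ only holding modulo an additive constant, points the paper glosses over.
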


\begin{proof} By the previous argument, we only show that the sufficiency. Since
$$v\in C^{1,2}((0,\infty)\times \mR^d\rightarrow \mR^d),$$
we know that $v(t,X_t)$ is a continuous semimartinagle of $X_t$. Thus we ahve
\begin{equation}\label{eq1.14}\aligned \dif v(t,X_t)&=\left\{\frac{\partial v}{\partial t}(t,X_t)+\<\nabla v(t,X_t), b(t,X_t)\>+\frac{1}{2}Tr[(\sigma\sigma^*(t,X_t)\nabla^2v(t,X_t))]\right\}\dif t\\
&\qquad\qquad +\<\sigma^*(t,X_t)\nabla v(t,X_t),\dif W_t\>.
\endaligned\end{equation}
Combining this with (\ref{eq1.12}), we get
\begin{equation}\label{eq1.15}\aligned {} & \dif v(t,X_t) \\
&=\left\{-\frac{1}{2}|\sigma^*(t,X_t)\nabla v(t,X_t)|^2+\<\nabla v(t,X_t), b(t,X_t)\>\right\}+\<\sigma^*(t,X_t)\nabla v(t,X_t),\dif W_t\>\\
&=\frac{1}{2}|\gamma(t,X_t)|^2\dif t+\<\gamma(t,X_t), \dif W_t\>.
\endaligned\end{equation}
This implies (\ref{eq1.13}). The uniqueness of $v$ is easily obtained by (eq1.13).
\end{proof}

In the following, we give some degenerated examples(see \cite{W2} for details) with satisfying H\"{o}rmander's conditions.

\begin{example}\label{exa1.3}{\bf [Gruschin operator]} Let $b(t,z)=(-xt,-x^kyt)^T, z=(x,y)\in \mR^2, t\geq0$ and $\sigma(t,z)$ be given by
\begin{equation}\label{eq1.31}\aligned \sigma(t,z)=
\left( \begin{array}{ccc}     1 & 0\\  0 & x^k\\ \end{array}\right),\quad k\in \mathbb{N}, z=(x,y)\in \mR^2, t\geq0.
\endaligned\end{equation}
Then $b\in Im(\sigma)$ and the H\"{o}rmander's condition holds for $\cH=\{\frac{\partial}{\partial x}, x^k\frac{\partial}{\partial y}\}$ with commutators up to order $k$. Define the subelliptic diffusion operator
$$L=X^2+Y^2+b(t,\cdot).$$
Let $\gamma(t,z)=(-xt,-yt)^*$ and $X_s$ be the associated $L$-diffusion process, then $b(t,z)=\sigma(t,z)\gamma(t,z)$. Assume that $v\in C^{1,2}((0,\infty)\times \mR^2\rightarrow \mR)$ fulfills the following
\begin{equation}\label{eq1.32}\aligned\frac{1}{2}\int^t_0|\gamma(s,X_s)|^2\dif s+\int^t_0\<\gamma(s,X_s), \dif W_s\>=v(t,X_t)-v(0,X_0).
\endaligned\end{equation}
Then, by Theorem \ref{th1.1}, we know that $v$ satisfies the equation (\ref{eq1.12}).
\end{example}

\begin{example}\label{exa1.4}{\bf [Kohn operator]} Consider the three-dimensional
Heisenberg group realized as $\mR^3$ equipped with the group multiplication
$$(x,y,z)(x',y',z') := (x+x', y+y', z+z'+(xy'-x'y)/2),$$
which is a Lie group with left-invariant orthonormal frame $\{X,Y,Z\}$, where
$$X =\frac{\partial}{\partial x}-\frac{y}{2}\frac{\partial}{\partial z},~ Y =\frac{\partial}{\partial y}+\frac{x}{2}\frac{\partial}{\partial z},~Z=[X,Y]=\frac{\partial}{\partial z}$$
Then the Kohn-Laplacian is $\Delta_H:=X^2 + Y^2.$
Let
$$b(t,u)=\left(xt,yt, \frac{z(x-y)}{2}t\right), \quad u=(x,y,z)\in \mR^3, t\geq0$$
and $\sigma(t,z)$ be given by
\begin{equation}\label{eq1.31}\aligned \sigma(t,u)=
\left( \begin{array}{ccc}
 1 & 0 & 0\\
 0 & 1 & 0\\
 -\frac{y}{2} & \frac{x}{2}&0\\ \end{array}\right),\quad u=(x,y,z)\in \mR^3, t\geq0.
\endaligned\end{equation}
Define the subelliptic diffusion operator
$$L=X^2+Y^2+b(t,\cdot).$$
Let $\gamma(t,z)=(xt,yt,zt)^*$ and $X_s$ be the associated $L$-diffusion process, then $b\in Im(\sigma)$ and $b(t,z)=\sigma(t,z)\gamma(t,z)$.
Then, the H\"{o}rmander's condition holds for $\cH=\{X, Y\}$ . Assume that $v\in C^{1,2}((0,\infty)\times \mR^2\rightarrow \mR)$ fulfills the following
\begin{equation}\label{eq1.32}\aligned\frac{1}{2}\int^t_0|\gamma(s,X_s)|^2\dif s+\int^t_0\<\gamma(s,X_s), \dif W_s\>=v(t,X_t)-v(0,X_0)
\endaligned\end{equation}
Then, by theorem \ref{th1.1}, we know that $v$ satisfies the equation (\ref{eq1.12}).
\end{example}

Next, we present a degenerated example without H\"{o}rmander's conditions.

\begin{example}\label{exa1.5}Let $f(x)=e^x, x\in \mR$. Denote by $X_t$ the degenerated diffusion process on $\mR^2$ given by
$$X_t=(B_t^1,e^{B_t^2})^T,$$
where $B_t^i,i=1,2$ are independent of one dimensional Brownian motion. For every $z=(x,y)\in \mR^2, t\geq0$, define
\begin{equation*}\aligned &b(t,z)=\left(0,\frac{1}{2}y\vee 0\right)^T\\
&\sigma(t,z)=
\left( \begin{array}{ccc}     1 & 0\\  0 & y\vee 0\\ \end{array}\right).
\endaligned\end{equation*}
Then $b\in Im(\sigma)$ and $\sigma(t,z),b(t,z)$ does not satisfy H\"{o}rmander's condition. And $X_s$ be the associated $L$-diffusion process for the generator $L:=\sigma\sigma^*.$
Let $\gamma(t,z)=(0,\frac{1}{2}g(\tilde{f}^{-1}(y)))^T$,
then $b(t,z)=\sigma(t,z)\gamma(t,z)$. Assume that $v\in C^{1,2}((0,\infty)\times \mR^2\rightarrow \mR)$ fulfills the following
\begin{equation}\label{eq1.32}\aligned\frac{1}{2}\int^t_0|\gamma(s,X_s)|^2\dif s+\int^t_0\<\gamma(s,X_s), \dif W_s\>=v(t,X_t)-v(0,X_0).
\endaligned\end{equation}
Then, by Theorem \ref{th1.1}, we know that $v$ satisfies the equation (\ref{eq1.12}).
\end{example}

\begin{theorem}\label{th1.3} Assume that $\gamma:[0,\infty)\times \mR^d\rightarrow\mR^d$ is a function satisfying (\ref{eq1.6}). Then there exist a function $f\in C^2(\mR\rightarrow\mR)$ and a scalar function $v\in C^{1,2}((0,\infty)\times \mR^d\rightarrow \mR)$ such that
\begin{equation}\label{eq1.16}\aligned\frac{1}{2}\int^t_0|\gamma(s,X_s)|^2\dif s+\int^t_0\<\gamma(s,X_s), \dif W_s\>=f(v(t,X_t))-f(v(0,X_0))
\endaligned\end{equation}
if and only if
\begin{equation}\label{eq1.17}\aligned
\begin{cases}
&f'(v)(t,x) \frac{\partial v}{\partial t}(t,x)=-\frac{1}{2}\big\{f'(v)(t,x) Tr[(\sigma\sigma^*(t,x)\nabla^2v(t,x))]\\
&\qquad\qquad\quad +f''(v)(t,x)|\sigma^*(t,x) \nabla v(t,x)|^2
+|f'(v)(t,x) |^2|\sigma^*(t,x)\nabla v(t,x)|^2\big\},\\
&b(t,x)=f'(v)(t,x)\sigma(t,x)\sigma^*(t,x)\nabla v(t,x),\quad (t,x)\in [0,\infty)\times \Lambda.
\end{cases}
\endaligned\end{equation}
\end{theorem}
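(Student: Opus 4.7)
The plan is to parallel the proof of Theorem \ref{th1.1}, the only new ingredient being the chain rule for $f\circ v$, which brings an extra $f''$-term into the quadratic variation part of It\^o's formula. In both directions the key is to apply It\^o's formula to $f(v(t,X_t))$ treated as a $C^2$ composition with the continuous semimartingale $(X_t)_{t\geq 0}$ and then to invoke the uniqueness of the Doob--Meyer decomposition.

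For the necessity, I would first expand
\begin{equation*}
\aligned
\dif f(v(t,X_t)) &= f'(v(t,X_t))\Bigl\{\tfrac{\partial v}{\partial t}+\<\nabla v,b\>+\tfrac{1}{2}\Tr(\sigma\sigma^*\nabla^2 v)\Bigr\}\dif t\\
&\quad +\tfrac{1}{2}f''(v(t,X_t))|\sigma^*\nabla v|^2\dif t +f'(v(t,X_t))\<\sigma^*\nabla v,\dif W_t\>,
\endaligned
\end{equation*}
where all the spatial objects are evaluated at $(t,X_t)$. Matching this with $\tfrac{1}{2}|\gamma(t,X_t)|^2\dif t+\<\gamma(t,X_t),\dif W_t\>$ via the uniqueness of the martingale and the finite variation parts (exactly as in the derivation of \eqref{eq1.11}) yields the pointwise martingale-part identity $\gamma(t,x)=f'(v(t,x))\sigma^*(t,x)\nabla v(t,x)$ on $[0,\infty)\times\Lambda$, together with a drift identity. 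Plugging the martingale-part identity into the constraint \eqref{eq1.6}, i.e.\ $b=\sigma\gamma$, gives the second equation of \eqref{eq1.17}; substituting it back in the drift identity, using $\<\nabla v,b\>=f'(v)|\sigma^*\nabla v|^2$ and $\tfrac{1}{2}|\gamma|^2=\tfrac{1}{2}(f'(v))^2|\sigma^*\nabla v|^2$, and collecting terms produces exactly the first equation of \eqref{eq1.17}.

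For the sufficiency, assuming \eqref{eq1.17}, I would apply It\^o's formula to $f(v(t,X_t))$ as above, then replace $f'(v)\tfrac{\partial v}{\partial t}$ by the right-hand side of the first equation of \eqref{eq1.17} and $b$ by $f'(v)\sigma\sigma^*\nabla v$ from the second. After cancellation this leaves
\begin{equation*}
\aligned
\dif f(v(t,X_t)) = \tfrac{1}{2}(f'(v))^2|\sigma^*\nabla v|^2\dif t +f'(v)\<\sigma^*\nabla v,\dif W_t\>,
\endaligned
\end{equation*}
and since $\gamma=f'(v)\sigma^*\nabla v$ on the path (this is forced by \eqref{eq1.6} together with the second line of \eqref{eq1.17}, up to the kernel of $\sigma$ which does not contribute to $|\gamma|^2$ or $\<\gamma,\dif W\>$ in the degenerate setting), the right-hand side equals $\tfrac{1}{2}|\gamma|^2\dif t+\<\gamma,\dif W_t\>$; integrating from $0$ to $t$ gives \eqref{eq1.16}.

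The only subtle point I expect is justifying $\gamma(t,X_t)=f'(v(t,X_t))\sigma^*(t,X_t)\nabla v(t,X_t)$ in the sufficiency direction when $\sigma$ is degenerate, since the second line of \eqref{eq1.17} only determines $\gamma$ modulo $\ker\sigma(t,x)$. This is handled exactly as in Theorem \ref{th1.1}: the Girsanov exponent $\int_0^t\<\gamma,\dif W_s\>$ depends on $\gamma$ only through $\sigma\gamma$ and $|\gamma|^2$ along the path, so one may, without loss of generality, work with the minimal-norm representative $\gamma=f'(v)\sigma^*\nabla v$ determined by \eqref{eq1.6} on the support $\Lambda$, whence the computation above closes.
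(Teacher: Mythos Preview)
Your argument is essentially the paper's: both rest on It\^o's formula for $f(v(t,X_t))$ together with the uniqueness of the Doob--Meyer decomposition. The only difference is packaging---the paper invokes Theorem~\ref{th1.1} with $f\circ v$ in place of $v$ (legitimate since $f\circ v\in C^{1,2}$), obtaining the system \eqref{eq1.12} for $f(v)$, and then expands $\nabla f(v)=f'(v)\nabla v$ and $\mathrm{Tr}[\sigma\sigma^*\nabla^2 f(v)]=f'(v)\,\mathrm{Tr}[\sigma\sigma^*\nabla^2 v]+f''(v)\,|\sigma^*\nabla v|^2$ by the chain rule to reach \eqref{eq1.17}. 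You carry out the same computation directly, which is fine but a line or two longer.

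One caution about your final paragraph. The claim that $\int_0^t\langle\gamma,\dif W_s\rangle$ ``depends on $\gamma$ only through $\sigma\gamma$ and $|\gamma|^2$'' is not correct: $W$ is an $m$-dimensional Brownian motion and the stochastic integral sees all of $\gamma$, including any component in $\ker\sigma(t,x)$; likewise $|\gamma|^2$ is not determined by $\sigma\gamma$ alone. The paper does not address this point either---it simply identifies $\gamma$ with $f'(v)\,\sigma^*\nabla v$ from the martingale-part matching (cf.\ \eqref{eq1.9}) in the necessity direction and tacitly uses the same identification in the sufficiency direction. So your instinct that something deserves comment in the degenerate case is right, but the justification you give does not work; the honest statement is that the sufficiency step uses $\gamma=f'(v)\,\sigma^*\nabla v$ on $\Lambda$, not merely $\sigma\gamma=f'(v)\,\sigma\sigma^*\nabla v$.
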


\begin{proof} According to Theorem \ref{th1.1}, we know that (\ref{eq1.16}) is equivalent to
\begin{equation*}\aligned
\begin{cases}
&\frac{\partial f(v)}{\partial t}(t,x)=-\frac{1}{2}\{Tr[(\sigma\sigma^*(t,x)\nabla^2f(v)(t,x))]+|\sigma^*(t,x)\nabla f(v)(t,x)|^2\},\\
&\sigma(t,x)\sigma^*(t,x)\nabla f(v)(t,x)=b(t,x),\quad (t,x)\in [0,\infty)\times \Lambda.
\end{cases}
\endaligned\end{equation*}
Since
\begin{equation*}\aligned
&Tr[(\sigma\sigma^*(t,x)\nabla^2f(v)(t,x))]=Tr[(\sigma\sigma^*(t,x)\nabla(f'(v)(t,x)\nabla v(t,x))]\\
&=f'(v)(t,x) Tr[(\sigma\sigma^*(t,x)\nabla^2 v(t,x))]+f''(v)(t,x)|\sigma^*(t,x) \nabla v(t,x)|^2
\endaligned\end{equation*}
and
$$\sigma^*(t,x)\nabla f(v)(t,x)= f'(v)\sigma^*(t,x)\nabla v(t,x).$$
Combining all the above equalities, we conclude that (\ref{eq1.16}) is equivalent to (\ref{eq1.17}).
\end{proof}

\begin{remark}\label{exa1.4} Under the conditions of Theorem \ref{th1.3}, we have the following examples of the function $f$

(a) If $f(x)=x$, then
\begin{equation}\label{eq1.18}\aligned\frac{1}{2}\int^t_0|\gamma(s,X_s)|^2\dif s+\int^t_0\<\gamma(s,X_s), \dif W_s\>=v(t,X_t)-v(0,X_0)
\endaligned\end{equation}
if and only if
\begin{equation}\label{eq1.19}\aligned\sigma(t,x)\sigma^*(t,x)\nabla v(t,x)=b(t,x),\quad (t,x)\in [0,\infty)\times \Lambda
\endaligned\end{equation}
and $v$ satisfies the following time-reversed KPZ type equation,
\begin{equation}\label{eq1.20}
\frac{\partial v}{\partial t}(t,x)=-\frac{1}{2}\{Tr[(\sigma\sigma^*(t,x)\nabla^2 v(t,x))]
+|\sigma^*(t,x)\nabla v(t,x)|^2\}, \quad (t,x)\in [0,\infty)\times \Lambda.
\end{equation}
In particular, if $\sigma$ is invertible, this covers the result obtained in \cite{twwy}.

(b) Assume that $v\in C^{1,2}((0,\infty)\times \mR^d\rightarrow \mR)$ and $f(x)=\log x$ for $x>0$, then
\begin{equation}\label{eq1.21}\aligned\frac{1}{2}\int^t_0|\gamma(s,X_s)|^2\dif s+\int^t_0\<\gamma(s,X_s), \dif W_s\>=\log \frac{v(t,X_t)}{v(0,X_0)}
\endaligned\end{equation}
if and only if
\begin{equation}\label{eq1.22}\aligned\sigma(t,x)\sigma^*(t,x)\nabla v(t,x)=v(t,x)b(t,x),\quad (t,x)\in [0,\infty)\times \Lambda,
\endaligned\end{equation}
and $v$ satisfies the following time-reversed heat kernel type equation,
\begin{equation}\label{eq1.23}\aligned
\frac{\partial v}{\partial t}(t,x)&=-\frac{1}{2}Tr[(\sigma\sigma^*(t,x)\nabla^2 v(t,x))], \quad (t,x)\in [0,\infty)\times \Lambda.
\endaligned\end{equation}
In particular, if $\sigma=Id$, then we have
\begin{equation}\label{eq1.24}\aligned\frac{1}{2}\int^t_0|b(s,X_s)|^2\dif s+\int^t_0\<b(s,X_s), \dif W_s\>=\log \frac{v(t,X_t)}{v(0,X_0)}
\endaligned\end{equation}
if and only if
\begin{equation}\label{eq1.25}\aligned\nabla v(t,x)=v(t,x)b(t,x),\quad (t,x)\in [0,\infty)\times \mR^d,
\endaligned\end{equation}
and $v$ satisfies the standard heat kernel equation,
\begin{equation}\label{eq1.26}\aligned
\frac{\partial v}{\partial t}(t,x)&=-\frac{1}{2}\Delta v(t,x), \quad (t,x)\in [0,\infty)\times \mR^d.
\endaligned\end{equation}

(c) If $f(x)=x^{2k+1}, k\in \mathbb{N}\cup\{0\}$, or $f(x)=x^{2k+1}, k\in \mathbb{Z}$ for $x\neq0$, then
\begin{equation}\label{eq1.27}\aligned\frac{1}{2}\int^t_0|\gamma(s,X_s)|^2\dif s+\int^t_0\<\gamma(s,X_s),\dif W_s\>=v^{2k+1}(t,X_t)-v^{2k+1}(0,X_0)
\endaligned\end{equation}
if and only if
\begin{equation}\label{eq1.28}\aligned (2k+1) v^{2k}(t,x)\sigma(t,x)\sigma^*(t,x)\nabla v(t,x)=b(t,x),\quad (t,x)\in [0,\infty)\times \Lambda,
\endaligned\end{equation}
and $v$ satisfies the following time-reversed HJB equation,
\begin{equation}\label{eq1.29}\aligned
\frac{\partial v}{\partial t}(t,x)&=-\frac{1}{2}\Big\{Tr[(\sigma\sigma^*(t,x)\nabla^2v(t,x))]\\
&~~~~~~~~~~~~~+\frac{(2k+1) v^{2k+1}(t,x)+2k}{v(t,x)}|\sigma^*(t,x) \nabla v(t,x)|^2\Big\}.
\endaligned\end{equation}

(d) If $f(x)=\tan(x), x\in(-\frac{\pi}{2},\frac{\pi}{2})$, then
\begin{equation}\label{eq1.27}\aligned\frac{1}{2}\int^t_0|\gamma(s,X_s)|^2\dif s+\int^t_0\<\gamma(s,X_s), \dif W_s\>=\tan(v(t,X_t))-tan(v(0,X_0))
\endaligned\end{equation}
if and only if
\begin{equation}\label{eq1.28}\aligned \sigma(t,x)\sigma^*(t,x)\nabla v(t,x)=\cos^2(v(t,x)) b(t,x),\quad (t,x)\in [0,\infty)\times \Lambda,
\endaligned\end{equation}
and $v$ satisfies the following time-reversed HJB equation,
\begin{equation}\label{eq1.29}\aligned
\frac{\partial v}{\partial t}(t,x)&=-\frac{1}{2}\Big\{Tr[(\sigma\sigma^*(t,x)\nabla^2v(t,x))]\\
&~~~~~~~~~~~~~+\frac{[\cos(v(t,x))+\sin(v(t,x))]^2}{\cos^2(v(t,x))}|\sigma^*(t,x) \nabla v(t,x)|^2\Big\}.
\endaligned\end{equation}
\end{remark}

\begin{proof} It is obvious for (a). We only prove (b)((c) may be similarly handed). By Theorem \ref{th1.3}, we know that (\ref{eq1.21}) is equivalent to
\begin{equation}\label{eq1.30}\aligned
\begin{cases}
&\frac{1}{v(t,x)}\frac{\partial v}{\partial t}(t,x)=-\frac{1}{2}\big\{\frac{1}{v(t,x)}Tr[(\sigma\sigma^*(t,x)\nabla^2v(t,x))]\\
&\qquad\qquad\qquad\qquad -\frac{1}{v^2(t,x)}|\sigma^*(t,x) \nabla v(t,x)|^2+\frac{1}{v^v(t,x)}|\sigma^*(t,x)\nabla v(t,x)|^2\big\}\\
&\sigma(t,x)\sigma^*(t,x)\nabla v(t,x)=v(t,x)b(t,x),\quad (t,x)\in [0,\infty)\times \Lambda
\end{cases}
\endaligned\end{equation}
which are just (\ref{eq1.22}) and  (\ref{eq1.23}), respectively.
\end{proof}

\section{Non-Lipschitz SDEs with jumps}

\subsection{The characterisation theorem for SDEs with continuous diffusions on $\mR^d$}\label{chthrdc}
Let $(\mU,\|\cdot\|_{\mU})$ be
a finite dimensional normed space endowed with its Borel $\sigma$-algebra $\mathscr{U}$. Let $\nu$ be a $\sigma$-finite
measure defined on $(\mU,\mathscr{U})$. Let us fix $\mU_0\in\mathscr{U}$ with $\nu(\mU\setminus\mU_0)<\infty$
and $\int_{\mU_0}\|u\|_{\mU}^2\,\nu(\dif u)<\infty$. Furthermore, let $\lambda: [0,\infty)\times \mU\to(0,1]$ be a
given measurable function. Then, following e.g. \cite{iw,jjas}, there exists a non-negative  integer valued $(\cF_t)_{t\geq 0}$-Poisson
random measure $N_{\lambda}(\dif t, \dif u)$ on the given filtered probability space $(\Omega,\cF,\mP;(\cF_t)_{t\geq 0})$ with intensity
$\mE(N_{\lambda}(\dif t, \dif u))=\lambda(t,u)\dif t\nu(\dif u)$.
Set
$$\tilde{N}_\lambda(\dif t,\dif u):=N_\lambda(\dif t,\dif u)-\lambda(t,u)\dif t\nu(\dif u)$$
that is, $\tilde{N}_\lambda(\dif t,\dif u)$ stands for the compensated $(\cF_t)_{t\geq 0}$-predictable martingale
measure of $N_{\lambda}(\dif t, \dif u)$.

We are concerned with the following SDE on $\mR^d$
\be\left\{\begin{array}{l}
\dif X_t=b(t,X_t)\dif t+\sigma(t,X_t)\dif B_t+\int_{\mU_0}f(t,X_{t-},u)\tilde{N}_\lambda(\dif t, \dif u), \qquad t\in(0,T],\\
X_0=x_0\in \mR^d,
\end{array}
\right.
\label{Eq1}
\ee
for any given $T>0$, where $b, \sigma$ are Borel measurable as given in the previous section, $(B_t)_{t\geq0}$ is an $m$-dimensional $\{\cF_t\}_{t\geq0}$-Brownian motion,
$f: [0,T]\times\mR^d\times\mU_0\mapsto\mR^d$ is Borel measurable, and $\tilde{N}_\lambda$
is the compensated $(\cF_t)_{t\geq 0}$-predictable martingale measure of an induced $\{\cF_t\}_{t\geq0}$-Poisson random measure given above which is independent of
$(B_t)_{t\geq0}$. This equation arises in nonlinear filtering and has been considered
recently in \cite{hqxz,qiao,qd} (see also the monograph \cite{situ}).

The characterisation theorem for path-independent property of Girsanov density for the above equation with non-degenerated $\sigma$ was established in \cite{hqwu}. More precisely, under the following conditions
\begin{enumerate}[{\bf (H$_1$)}]
\item
There exists $\lambda_0\in\mR$ such that for all $x,y\in\mR^d$ and $t\in[0,T]$
\ce
2\<x-y, b(t,x)-b(t,y)\>+\|\sigma(t,x)-\sigma(t,y)\|^2\leq
\lambda_0|x-y|^2\kappa(|x-y|),
\de
where $\kappa$ is a positive continuous function, bounded on $[1,\infty)$ and satisfying
\ce
\lim\limits_{x\downarrow0}\frac{\kappa(x)}{\log
x^{-1}}=\delta<\infty.
\de
\end{enumerate}

\begin{enumerate}[{\bf (H$_2$)}]
\item
There exists $\lambda_1>0$
such that for all $x\in\mR^d$ and $t\in[0,T]$
$$
|b(t,x)|^2+\|\sigma(t,x)\|^2\leq \lambda_1(1+|x|)^2.
$$
\end{enumerate}

\begin{enumerate}[{\bf (H$_3$)}]
\item
$b(t,x)$ is continuous in $x$ and there exists $\lambda_2>0$ such that
\be
\<\sigma(t,x)h,h\>\geq\sqrt{\lambda_2}|h|^2, \qquad
t\in[0,T], \quad x, h\in\mR^d. \label{sig}
\ee
\end{enumerate}

\begin{enumerate}[({\bf H$_f$})]
\item
For all $x,y\in\mR^d$ and $t\in[0,T]$,
\ce \int_{\mU_0}\big|f(t,x,u)-f(t,y,u)\big|^2\nu(\dif
u)\leq2|\lambda_0||x-y|^2\kappa(|x-y|) \de and for $q=2$ and $4$
$$
\int_{\mU_{0}}|f(t,x,u)|^{q}\,\nu(\dif u)\leq \lambda_1(1+|x|)^{q}.
$$
\end{enumerate}
Qiao and Wu in \cite{hqwu} proved a characterisation theorem, where a partial integer-differential equation (PIDE) as the main characterizing equation was derived.
We notice that the assumption (H3) on the diffusion coefficient $\sigma$ is too
strong. Here we aim to relax this condition. First of all, we let
$\sigma$ to be $d\times m$-matrix-valued for $d,m\in\mN$, i.e., $\sigma$ is in
general not square matrix-valued.

Let $\gamma:[0,\infty)\times\mR^d\rightarrow\mR^m$ be a measurable function such that the following condition {\bf (H$_{\gamma,\lambda}$)} holds
\ce
&&\mE\Big[\exp\Big\{\frac{1}{2}\int_0^T\left|\gamma(s,X_s)\right|^2\dif s
+\int_0^T\int_{\mU_0}\left(\frac{1-\lambda(s,u)}{\lambda(s,u)}\right)^2\lambda(s,u)\nu(\dif u)\dif s\Big\}\Big]<\infty.
\de

Set
\ce
Z_t:&=&\exp\bigg\{-\int_0^t\<\gamma(s,X_s),\dif B_s\>-\frac{1}{2}\int_0^t
\left|\gamma(s,X_s)\right|^2\dif s\\
&&\quad\qquad -\int_0^t\int_{\mU_0}\log\lambda(s,u)N_{\lambda}(\dif s, \dif u)
-\int_0^t\int_{\mU_0}(1-\lambda(s,u))\nu(\dif u)\dif s\bigg\},\\
M_t:&=&-\int_0^t\<\gamma(s,X_s),\dif B_s\>
+\int_0^t\int_{\mU_0}\frac{1-\lambda(s,u)}{\lambda(s,u)}\tilde{N}_{\lambda}(\dif s, \dif u),
\de
and then $(Z_t)$ is the Dol\'eans-Dade exponential of $(M_t)$, see e.g., \cite{dm}.

Under ({\bf H}$_1$), ({\bf H}$_2$) and {\bf (H$_f$)}, it is well known that there exists
a unique strong solution to Eq.(\ref{Eq1}) (cf. \cite[Theorem 170, p.140]{situ}). This
solution will be denoted by $X_t$. In the following, we define the support of
a random vector (\cite{pa}) and then present a result about the support of $X_t$
under the above assumptions.

\bd\label{supset}
The support of a random vector $Y$ is defined as
$$
supp(Y):=\{x\in\mR^d | (\mP\circ Y^{-1})(B(x,r))>0, ~\mbox{for all}~ r>0\}
$$
where $B(x,r):=\{y\in\mR^d | |y-x|<r\}$, the open ball centered at $x$ with radius $r$.
\ed

Under {\bf (H$_{\gamma,\lambda}$)}, $(M_t)$
is a locally square integrable martingale. Moreover, $M_t-M_{t-}>-1$ a.s. and
\ce
&&\mE\Big[\exp\Big\{\frac{1}{2}<M^c,M^c>_T+<M^d,M^d>_T\Big\}\Big]\\
&=&\mE\Big[\exp\Big\{\frac{1}{2}\int_0^T\left|\gamma(s,X_s)\right|^2\dif s+\int_0^T\int_{\mU_0}\left(\frac{1-\lambda(s,u)}{\lambda(s,u)}\right)^2\lambda(s,u)\nu(\dif u)\dif s\Big\}\Big]<\infty,
\de
where $M^c$ and $M^d$ are continuous and purely discontinuous martingale parts of $(M_t)$, respectively.
Thus, it follows from \cite[Theorem 6]{ppks} that $(Z_t)$ is an exponential martingale. Define a measure $\tilde{\mP}$ via
$$
\frac{\dif \tilde{\mP}}{\dif \mP}=Z_t.
$$
By the Girsanov theorem for Brownian motions and random measures, one can obtain that under the measure
$\tilde{\mP}$ the system (\ref{Eq1}) is transformed into the following
\ce
\dif X_t=[b(t,X_t)+\sigma(t,X_t)\gamma(t,X_t)]\dif t+\sigma(t,X_t)\dif \tilde{B}_t+\int_{\mU_0}f(t,X_{t-},u)\tilde{N}(\dif t, \dif u),
\de
Now let us assume that (along th paths of $(X_t)_{t\geq0}$)
$$b(t,X_t)+\sigma(t,X_t)\gamma(t,X_t)=0.$$
Then we get
\ce
\dif X_t=\sigma(t,X_t)\dif \tilde{B}_t+\int_{\mU_0}f(t,X_{t-},u)\tilde{N}(\dif t, \dif u),
\de
where
\ce
\tilde{B}_t:=B_t+\int_0^t\gamma(s,X_s)\dif s, \quad
\tilde{N}(\dif t, \dif u):=N_\lambda(\dif t, \dif u)-\dif t\nu(\dif u).
\de

Next, we set
\ce
Y_t&:=&-\log Z_t=\int_0^t\<\gamma(s,X_s),\dif B_s\>+\frac{1}{2}\int_0^t
\left|\gamma(s,X_s)\right|^2\dif s\\
&&\quad +\int_0^t\int_{\mU_0}\log\lambda(s,u)N_{\lambda}(\dif s, \dif u)
+\int_0^t\int_{\mU_0}(1-\lambda(s,u))\nu(\dif u)\dif s.
\de
Clearly, $(Y_t)$ is a one-dimensional stochastic process with the following stochastic differential form
\ce
\dif Y_t&=&\<\gamma(t,X_t),\dif B_t\>+\frac{1}{2}\left|\gamma(t,X_t)\right|^2\dif t\\
&&\quad +\int_{\mU_0}\log\lambda(t,u)N_{\lambda}(\dif t, \dif u)
+\int_{\mU_0}(1-\lambda(t,u))\nu(\dif u)\dif t.
\de

Let $\Lambda:=\supp((t,X_t),t\geq0)$. Then we have the following.

\bt\label{chth1}
Let $v: [0,T]\times\mR^d\rightarrow\mR$ be a scalar function which is $C^1$
with respect to the first variable and $C^2$ with respect to the second variable. Then
\be
v(t, X_t)&=&v(0,x_0)+\int_0^t\<\gamma(s,X_s),\dif B_s\>+\frac{1}{2}\int_0^t
\left|\gamma(s,X_s)\right|^2\dif s\no\\
&&\quad +\int_0^t\int_{\mU_0}\log\lambda(s,u)N_{\lambda}(\dif s, \dif u)
+\int_0^t\int_{\mU_0}(1-\lambda(s,u))\nu(\dif u)\dif s,
\label{pathin}
\ee
equivalently,
\ce
Y_t=v(t,X_t)-v(0,x_0), \quad t\in[0,T]
\de
holds if and only if
\be
\gamma(t,x)&=&(\sigma\sigma^*\nabla v)(t, x), \qquad\qquad (t,x)\in \Lambda, \label{chco1}\\
\lambda(t,u)&=&\exp\{v(t,x+f(t,x,u))-v(t,x)\}, \quad (t,x,u)\in\Lambda\times\mU_0, \label{chco2}
\ee
and $v$ satisfies the following time-reversed partial integro-differential equation (PIDE),
\be
\frac{\partial}{\partial t}v(t,x)&=&-\frac{1}{2}[Tr(\sigma\sigma^*)\nabla^2 v](t,x)-\frac{1}{2}|\sigma^*\nabla v|^2(t,x)-\int_{\mU_0}\Big[e^{v(t,x+f(t,x,u))-v(t,x)}-1\no\\
&&\qquad\qquad -\<f(t,x,u),\nabla v(t,x)\>e^{v(t,x+f(t,x,u))-v(t,x)}\Big]\nu(\dif u).
\label{chco3}
\ee
\et

\begin{proof}
Following the line of \cite{qiao}. To the reader's convenience, we give the detailed proof here. The proof of necessity. By (\ref{pathin}),
\be
\dif v(t, X_t)&=&\left[\frac{1}{2}\left|\gamma(t,X_t)\right|^2+\int_{\mU_0}\Big(\lambda(t,u)\log\lambda(t,u)+\big(1-\lambda(t,u)\big)\Big)\nu(\dif u)\right]\dif t\no\\
&&\quad +\int_{\mU_0}\log\lambda(t,u)\tilde{N}_{\lambda}(\dif t, \dif u)+\<\gamma(t,X_t),\dif B_t\>.
\label{pathin1}
\ee
It is clear from (\ref{pathin1}) that $v(t, X_t)$ is a c\`adl\`ag semimartingale with
a predictable finite variation part. On the other hand, note that $X_t$ satisfies Equation (\ref{Eq1}) and $v(t, x)$ is a $C^{1,2}$-function, by applying the It\^o formula to the composition process $v(t, X_t)$, one could obtain the following
\be
\dif v(t, X_t)&=&\frac{\partial}{\partial t}v(t,X_t)\dif t+\<b,\nabla v\>(t, X_t)\dif t+\frac{1}{2}[Tr(\sigma\sigma^*)\nabla^2 v](t, X_t)\dif t\no\\
&&+\int_{\mU_0}\Big[v(t,X_{t-}+f(t,X_{t-},u))-v(t,X_{t-})\no\\
&&\qquad -\<f(t,X_{t-},u),\nabla v(t,X_{t-})\>\Big]\lambda(t,u)\nu(\dif u)\dif t\no\\
&&+\int_{\mU_0}\left[v(t,X_{t-}+f(t,X_{t-},u))-v(t,X_{t-})\right]\tilde{N}_\lambda(\dif t, \dif u)\no\\
&&+\<(\sigma^*\nabla v)(t,X_t),\dif B_t\>.
\label{itofor}
\ee
Thus, (\ref{itofor}) is another decomposition of the semimartingale $v(t, X_t)$. By uniqueness for decomposition of the semimartingale, it holds that for $t\in[0,T]$,
\ce
\gamma(t,X_t)&=&(\sigma^*\nabla v)(t,X_t),\\
\log\lambda(t,u)&=&v(t,X_{t-}+f(t,X_{t-},u))-v(t,X_{t-}), \quad u\in\mU_0,
\de
and
\ce
&&\frac{1}{2}\left|\gamma(t,X_t)\right|^2+\int_{\mU_0}\Big(\lambda(t,u)\log\lambda(t,u)+\big(1-\lambda(t,u)\big)\Big)\nu(\dif u)\\
&=&\frac{\partial}{\partial t}v(t,X_t)+\<b,\nabla v\>(t, X_t)+\frac{1}{2}[Tr(\sigma\sigma^*)\nabla^2 v](t, X_t)\no\\
&&+\int_{\mU_0}\Big[v(t,X_{t-}+f(t,X_{t-},u))-v(t,X_{t-})\no\\
&&\qquad -\<f(t,X_{t-},u),\nabla v(t,X_{t-})\>\Big]\lambda(t,u)\nu(\dif u), \quad a.s..
\de
Note that $(t,X_t)$ runs through $\Lambda$, thus, we have that
\be
\gamma(t,x)&=&(\sigma^*\nabla v)(t,x), \qquad\qquad\qquad\quad  (t,x)\in\Lambda, \label{chco11}\\
\log\lambda(t,u)&=&v(t,x+f(t,x,u))-v(t,x), \quad (t,x,u)\in\Lambda\times\mU_0, \label{chco22}
\ee
and
\be
&&\frac{1}{2}\left|\gamma(t,x)\right|^2+\int_{\mU_0}\Big(\lambda(t,u)\log\lambda(t,u)+\big(1-\lambda(t,u)\big)\Big)\nu(\dif u)\no\\
&=&\frac{\partial}{\partial t}v(t,x)+\<b,\nabla v\>(t,x)+\frac{1}{2}[Tr(\sigma\sigma^*)\nabla^2 v](t,x)\no\\
&&+\int_{\mU_0}\Big[v(t,x+f(t,x,u))-v(t,x)\no\\
&&\qquad -\<f(t,x,u),\nabla v(t,x)\>\Big]\lambda(t,u)\nu(\dif u).
\label{chco33}
\ee
It is easy to see that (\ref{chco11}) and (\ref{chco22}) correspond to (\ref{chco1}) and (\ref{chco2}), respectively,
which together with (\ref{chco33}) further yields the PIDE (\ref{chco3}).

Next, let us show sufficiency. Assume that there exists a $C^{1,2}$-function $v(t,x)$ satisfying
(\ref{chco1}), (\ref{chco2}) and (\ref{chco3}). For the composition process $v(t,X_t)$, the It\^o formula
admits us to get (\ref{itofor}). Combining (\ref{chco1}), (\ref{chco2}) and (\ref{chco3}) with (\ref{itofor}),
we have
\ce
\dif v(t, X_t)&=&\left[\frac{1}{2}\left|\gamma(t,X_t)\right|^2+\int_{\mU_0}\Big(\big(\lambda(t,u)\log\lambda(t,u)\big)\lambda(t,u)+\big(1-\lambda(t,u)\big)\Big)\nu(\dif u)\right]\dif t\no\\
&&\quad +\int_{\mU_0}\log\lambda(t,u)\tilde{N}_{\lambda}(\dif t, \dif u)+\<\sigma^{-1}(t,X_t)b(t,X_t),\dif B_t\>\\
&=&\<\gamma,\dif B_t\>+\frac{1}{2}
\left|\sigma^{-1}(t,X_t)b(t,X_t)\right|^2\dif t\no\\
&&\quad +\int_{\mU_0}\log\lambda(t,u)N_{\lambda}(\dif t, \dif u)
+\int_{\mU_0}(1-\lambda(t,u))\nu(\dif u)\dif t.
\de
The proof is completed.
\end{proof}

\medskip

\noindent {\bf Acknowledgments} The authors are grateful to Prof. Feng-Yu Wang and Dr Huijie Qiao for stimulating discussions.

\end{document}